\newcommand \la {\lambda}
\newcommand \Conf {{\mathrm {Conf}}}
\newcommand \Prob {{\mathbb P}}
\newcommand \be {{\widetilde B}(E)}
\newcommand \pe {{\widetilde \Pi}(E)}
\newcommand \C {{\mathbb C}}
\newcommand \R {{\mathbb R}}
\newcommand \E {S}
\newcommand \im {\mathrm{ Im }}
\newtheorem{theorem}{Theorem}
\newtheorem{corollary}[theorem]{Corollary}
\newtheorem{proposition}[theorem]{Proposition}
\theoremstyle{remark}
\newtheorem{ex}{Example}
\begin{document}

\title[Determinantal point processes associated with de Branges spaces]{Quasi-symmetries and rigidity for determinantal point processes
associated with de Branges spaces}

\date{}

\author{Alexander I. Bufetov}
\address{Alexander I. Bufetov\\ Aix-Marseille Universit{\'e}, CNRS, Centrale Marseille, I2M, UMR 7373}
\address{Steklov Institute of Mathematics, Moscow}
\address{Institute for Information Transmission Problems, Moscow}
\address{National Research University Higher School of Economics, Moscow}

\author{Tomoyuki Shirai }
\address{Tomoyuki Shirai\\ Institute of Mathematics for Industry, Kyushu University, Fukuoka, 819-0395,}
\address{ Japan}

\maketitle
\begin{abstract}
In this note, we show that
 determinantal point processes on the real line corresponding
 to de Branges spaces of entire functions are rigid in the sense of
 Ghosh-Peres and, under certain additional assumptions, quasi-invariant
 under the group of diffeomorphisms of the line with compact support. 
\end{abstract}
\section{De Branges Spaces}
Recall that a  de Branges function is an entire function $E$ satisfying
\[
 |E(z)| > |E^{\#}(z)| \quad \text{for $z \in \C_+$},
\]
where $E^{\#}(z) = \overline{E(\bar{z})}$.
We note that such an entire function $E$ does not have zeros in $\C_+$.
The de Branges space associated with $E$
is a Hilbert space $B(E)$ of entire functions such that

(i) $f|_{\R} \in L^2(\R, |E(\la)|^{-2}d\la)$,
and 

(ii)$\Big| \frac{f(z)}{E(z)} \Big|, \Big|
\frac{f^{\#}(z)}{E(z)} \Big|\le C_f (\im z)^{-1/2}$ for $z \in \C_+$,

where $f|_{\R}$ is the restriction of $f$ on $\R$.
       Under the condition (i),
       the condition (ii) is equivalent to the requirement that $f/E$ and $f^{\#}/E$
       belong to the Hardy
       space $H_2$ on the upper-half plane $\C_+$. The de Branges space
       is a natural generalization of the Paley-Wiener space which is
       associated with the de Branges function $E(z) = e^{-iaz}$.

The Hilbert space $B(E)$ admits the following reproducing kernel:
\[
 \Pi(E)(z,w)
 = \frac{E(z) \overline{E(w)} - E^{\#}(z) \overline{E^{\#}(w)}}{-2\pi i
 (z - \bar{w})},
\]
i.e., for any $f \in B(E)$, we have 
\[
 f(z) = \int_{\R} \Pi(E)(z,\la)f(\la)|E(\la)|^{-2} d\la.
\]
The diagonal value is given by
\[
 \Pi(E)(z,z) = \frac{|E(z)|^2 - |E^{\#}(z)|^2}{4\pi \im z} > 0 \quad (z
 \in \C \setminus \R),
\]
and
\[
 \Pi(E)(x,x) = \frac{1}{2\pi}
 \frac{\partial}{\partial y} |E(x+iy)|^2 \Big|_{y=0}
 \quad (x \in \R).
\]
The Hilbert space $B(E)$ is naturally identified with a subspace of
$L_2({\mathbb R}, |E(\la)|^{-2}d\la)$.

It will, however, be more convenient for us to consider the space
$$
\be=\left\{\frac{F(\la)}{E(\la)}, F\in B(E)\right\},
$$
which is then naturally identified with a subspace of $L_2({\mathbb
R})$.
Let $\pe: L_2({\mathbb R}) \to \be$ be the corresponding operator of
orthogonal projection with kernel
\[
 \pe(z,w) = \Pi(E)(z,w) \big(E(z) \overline{E(w)} \big)^{-1}.
\]
In this note we study  the  determinantal point process $\Prob_{\pe}$  on
${\mathbb R}$ corresponding to
the locally trace class projection operator  $\pe$ . We recall the necessary definitions.

\section{Determinantal Point Processes}

\subsection{Locally  trace class operators and their kernels.}
Let $\mu$ be a $\sigma$-finite Borel measure on a Polish space $\E$.
Let ${\mathscr I}_{1}(\E,\mu)$ be the ideal of trace class operators
${\widetilde K}\colon L_2(\E,\mu)\to L_2(\E,\mu)$ (see e.g. volume~1 of~\cite{ReedSimon} for
the precise definition); the symbol
$||{\widetilde K}||_{{\mathscr I}_{1}}$ will stand for the
${\mathscr I}_{1}$-norm of the operator ${\widetilde K}$.

Let  $\mathscr I_{1,  \mathrm{loc}}(\E,\mu)$ be the space of operators
$K\colon L_2(\E,\mu)\to L_2(\E,\mu)$
such that for any bounded Borel subset $B\subset \E$
we have $$\chi_BK\chi_B\in{\mathscr I}_1(\E,\mu).$$
{Such an operator $K$ is called a locally trace class operator.}
We endow the space ${\mathscr I}_{1, \mathrm{loc}}(\E,\mu)$
with a countable family of semi-norms
\begin{equation}
\label{btrcl}
||\chi_BK\chi_B||_{{\mathscr I}_1}
\end{equation}
where $B$ runs through an exhausting family $B_n$ of bounded sets.
A locally trace class operator $K$ admits a {\it kernel}, for which, slightly abusing notation, we use the same symbol $K$.

\subsection{Determinantal Point Processes}

A Borel probability measure $\mathbb{P}$ on
$\Conf(\E)${, the space of locally finite configurations,} is called
\textit{determinantal} if there exists an operator
$K\in{\mathscr I}_{1,  \mathrm{loc}}(\E,\mu)$ such that for any bounded measurable
function $g$, for which $g-1$ is supported in a bounded set $B$,
we have
\begin{equation}
\label{eq1}
\mathbb{E}_{\mathbb{P}}\Psi_g
=\det\biggl(1+(g-1)K\chi_{B}\biggr),
\end{equation}
{where $\Psi_g(X) = \prod\limits_{x \in X} g(x)$ for $X\in \Conf(S)$.
}
The Fredholm determinant in~\eqref{eq1} is well-defined since
$K\in {\mathscr I}_{1, \mathrm{loc}}(E,\mu)$.
The equation (\ref{eq1}) determines the measure $\Prob$ uniquely.

For any
pairwise disjoint bounded Borel sets $B_1,\dotsc,B_l\subset \E$
and any  $z_1,\dotsc,z_l\in {\mathbb C}$ from (\ref{eq1}) we  have
$$\mathbb{E}_{\mathbb{P}}z_1^{\#_{B_1}}\dotsb z_l^{\#_{B_l}}
=\det\biggl(1+\sum\limits_{j=1}^l(z_j-1)\chi_{B_j}K\chi_{\sqcup_i B_i}\biggr).$$

If $K$ belongs to
${\mathscr I}_{1, \text{loc}}(\E,\mu)$, then, throughout the paper, we denote
the corresponding determinantal measure by
$\mathbb{P}_K$.
If $K\in {\mathscr I}_{1, \text{loc}}(\E,\mu)$,
then the existence of the probability measure $\Prob_K$
is guaranteed (\cite{ShirTaka0}, \cite{Soshnikov}).
For further results and background on determinantal point processes, see
e.g. \cite{ghosh}, \cite{HoughEtAl}, \cite{Lyons},
   \cite{ShirTaka1}, \cite{ShirTaka2}, \cite{Soshnikov}.

 \section{The integrable form of the reproducing kernel}
 Our aim in this note is to study rigidity (in the sense of Ghosh and
 Peres) and the quasi-symmetries of the point process $\Prob_{\pe}$. We
 start by fixing some notation. 
For a de Branges function $E$,
we set
$$
A(z)=\frac{E(z)+E^{\#}(z)}{2}, \ B(z)=\frac{E(z)-E^{\#}(z)}{2i}.
$$
The kernel of the operator $\pe$, essentially the reproducing kernel of
our de Branges space, takes the form
$$
\pe(x,y)= \frac{1}{\pi} \frac{A(x)B(y)-B(x)A(y)}{ (x-y) E(x)
\overline{E(y)}}, \ x,y\in {\mathbb R}.
$$
Slightly abusing notation, we keep for the kernel the same symbol as for the operator.
For the diagonal values,
{it is easy to see that}
\begin{align}
\pe(x,x)
&= \frac{1}{2\pi} |E(x)|^{-2}\frac{\partial}{\partial y}
|E(x+iy)|^2\Big|_{y=0} \label{eq:pitilde}\\
&= \frac{1}{\pi} \frac{\partial}{\partial y} \log |E(x+iy)|
 \Big|_{y=0}. \nonumber
\end{align}

The kernel $\pe$ has {\it integrable} form.
Corollary 2.2 in \cite{buf-rig} now implies the rigidity, in the sense
of Ghosh and Peres \cite{ghosh}, \cite{GP}, of the determinantal measure
$\Prob_{\pe}$.
Before giving the notion of rigidity and our results, we provide some examples of
DPPs.

\section{Examples of determinantal point processes associated with de Branges spaces}
Here we give some examples of determinantal point process (DPP) associated
with de Branges space.
\begin{ex}[A class of orthogonal polynomial ensembles] 
 Let $E(z) = \prod_{i=1}^n (z+a_i)$ for $a_i \in \C_{+}$.
In this case, $B(E)$ is the space of polynomials of degree less than or equal to
 $n-1$. The corresponding DPP is the $n$-th orthogonal polynomial ensemble
 with weight $|E(\la)|^{-2}$.
 In particular, its intensity is given by
\[
\pe(x,x) = \frac{1}{\pi} \sum_{i=1}^{n} \frac{\im a_i}{|x+a_i|^2}.
\]
\end{ex}
\begin{ex}[Sine-process]
The Paley-Wiener space, for which $E(z) = e^{-iaz} \ (a>0)$,
 $A(z) = \cos az$, $B(z) = -\sin az$ yields the sine-kernel $\pe(x,y) = \frac{\sin
 a(x-y)}{\pi(x-y)}$.
\end{ex}

\begin{ex}[Eigenfunction expansion for Schr\"odinger equation]
Fix $\ell \in (0,\infty]$. For $V \in L^1_{\text{loc}}([0,\ell))$,
we consider the Schr\"odinger equation
\[
- \varphi_{\la}'' + V \varphi_{\la} = \la \varphi_{\la} \quad (\la \in \C)
\]
with $\varphi_{\la}(0)=1$ and $\varphi'_{\la}(0)=0$.
The solution $\varphi_{\la}(x)$ is jointly continuous in $(\la, x)$ and
entire in $\la$.
Suppose that the right boundary $x=\ell$ is of the limit circle type.
Then, for each fixed $b \in (0, \ell)$,
\[
 E_b(z) = \varphi_z(b) + i \varphi'_{z}(b),
\]
 defines a de Branges function. In this case,
\begin{align*}
 \Pi(E_{b})(z,w)
&= \frac{1}{\pi}
 \frac{\varphi_z(b) \overline{\varphi'_w(b)} - \varphi'_z(b) \overline{\varphi_w(b)}}{z-\bar{w}} \\
&= \frac{1}{\pi} \int_0^{b} \varphi_{z}(t)
 \overline{\varphi_{w}(t)} dt.
\end{align*}
The intensity of the corresponding DPP is given by
\begin{align*}
\widetilde{\Pi}(E_{b})(\la,\la)
= \frac{1}{\pi}
\frac{\int_0^{b} |\varphi_{\la}(t)|^2 dt}{|\varphi_{\la}(b)|^2 + |\varphi'_{\la}(b)|^2}
\end{align*}
\end{ex}

\section{Ghosh-Peres Rigidity.}
Given a bounded subset $B \subset {\mathbb R}$ and a configuration $X
\in \text{Conf}(\R),$ let $ \#_B(X)$ stand for the number of particles of
$X$ lying in $B$. Given a Borel subset $C \subset {\mathbb R}$, we let
$\mathcal{F}_C$ be the $\sigma$-algebra generated by all random
variables of the form $\#_B, B\subset C.$ If $\mathbb{P}$ is a point
process on ${\mathbb R}$ then we write $\mathcal{F}_C^{\mathbb{P}}$ for
the $\mathbb{P}$-completion of $\mathcal{F}_C$.

{\bf Definition (Ghosh and Peres \cite{ghosh}, \cite{GP}).} A point
process $\mathbb{P}$  is called {\bf rigid} if for any bounded Borel
subset $B$ the random variable $\#_B$ is  $\mathcal{F}_{{\mathbb
R}\backslash B}^{\mathbb{P}}$-measurable.

\begin{theorem}
The determinantal measure $\Prob_{\pe}$ is rigid in the sense of Ghosh
 and Peres.
\end{theorem}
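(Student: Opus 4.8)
The plan is to apply the variance criterion of Ghosh and Peres: to prove that $\#_B$ is $\mathcal{F}^{\Prob_{\pe}}_{\R\setminus B}$-measurable for every bounded Borel set $B$, it suffices to produce a sequence of bounded, compactly supported functions $f_n$ on $\R$ with $f_n\equiv 1$ on $B$ and
$$
\mathrm{Var}_{\Prob_{\pe}}(S_{f_n})\longrightarrow 0, \qquad S_{f_n}(X)=\sum_{x\in X}f_n(x).
$$
Indeed, writing $S_{f_n}=\#_B+T_n$ with $T_n=\sum_{x\in X\setminus B}f_n(x)$, each $T_n$ is $\mathcal{F}_{\R\setminus B}$-measurable; if the variance of $S_{f_n}$ tends to $0$ then $S_{f_n}-\mathbb{E}_{\Prob_{\pe}}S_{f_n}\to 0$ in $L^2(\Prob_{\pe})$, so that $\#_B=\lim_n\bigl(\mathbb{E}_{\Prob_{\pe}}S_{f_n}-T_n\bigr)$ in $L^2$, exhibiting $\#_B$ as an $L^2$-limit of $\mathcal{F}_{\R\setminus B}$-measurable random variables and hence as an $\mathcal{F}^{\Prob_{\pe}}_{\R\setminus B}$-measurable one. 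This is the mechanism that Corollary 2.2 of \cite{buf-rig} packages for integrable kernels; I indicate it directly.

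First I would record the variance identity for the determinantal process of a Hermitian projection. Since $\pe$ is the kernel of an orthogonal projection, $\pe(x,x)=\int_{\R}|\pe(x,y)|^2\,dy$, and a direct computation from the correlation functions gives, for real $f$,
$$
\mathrm{Var}_{\Prob_{\pe}}(S_f)=\frac12\iint_{\R^2}\bigl(f(x)-f(y)\bigr)^2\,|\pe(x,y)|^2\,dx\,dy .
$$
The key step, and the place where the de Branges structure enters, is to bound $|\pe(x,y)|^2$. Writing $E(\la)=|E(\la)|e^{i\varphi(\la)}$ on $\R$, so that $A=|E|\cos\varphi$ and $B=|E|\sin\varphi$, the integrable numerator collapses,
$$
A(x)B(y)-B(x)A(y)=|E(x)|\,|E(y)|\,\sin\bigl(\varphi(y)-\varphi(x)\bigr),
$$
while $|E(x)\overline{E(y)}|=|E(x)|\,|E(y)|$; hence
$$
|\pe(x,y)|^2=\frac{1}{\pi^2}\,\frac{\sin^2\bigl(\varphi(x)-\varphi(y)\bigr)}{(x-y)^2}\le\frac{1}{\pi^2(x-y)^2}.
$$
Thus the modulus of the kernel obeys exactly the sine-process bound, uniformly in $E$.

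With this bound in hand, $\mathrm{Var}_{\Prob_{\pe}}(S_f)\le\frac{1}{2\pi^2}\iint(f(x)-f(y))^2(x-y)^{-2}\,dx\,dy$, i.e. the variance is controlled by the $\dot H^{1/2}(\R)$ Gagliardo seminorm of $f$. I would then take, for $B\subset[-R,R]$ and $L_n\to\infty$, the logarithmic cut-offs
$$
f_n(x)=\begin{cases}1,& |x|\le R,\\ \dfrac{\log(L_n/|x|)}{\log(L_n/R)},& R\le|x|\le L_n,\\ 0,& |x|\ge L_n,\end{cases}
$$
which are $\equiv 1$ on $B$ and compactly supported, and for which the seminorm above is the logarithmic condenser capacity between $[-R,R]$ and $\R\setminus[-L_n,L_n]$, of order $1/\log(L_n/R)\to 0$. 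Hence $\mathrm{Var}_{\Prob_{\pe}}(S_{f_n})\to 0$, and the measurability conclusion of the first paragraph applies.

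The main obstacle is the kernel estimate: everything reduces to showing that $|\pe(x,y)|$ decays like $|x-y|^{-1}$, and it is precisely the integrable (de Branges) form together with the phase representation $E=|E|e^{i\varphi}$ that yields this universal bound with no hypotheses on $E$ beyond its being a de Branges function. Once the bound is established, the remaining estimate — that the $\dot H^{1/2}$ energy of the logarithmic cut-offs tends to zero — is the standard one-dimensional capacity computation underlying the Ghosh-Peres proof for the sine process, and I would either carry it out directly or simply invoke Corollary 2.2 of \cite{buf-rig}, whose hypotheses are verified by the displayed bound.
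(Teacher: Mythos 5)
Your proposal is correct and takes essentially the same route as the paper: the paper's entire proof consists of invoking Corollary 2.2 of \cite{buf-rig} and verifying its hypotheses from the bound $|A(x)|,|B(x)|\le |E(x)|$ on $\R$ (immediate since $A=\mathrm{Re}\,E$ and $B=\mathrm{Im}\,E$ on the real line), which is exactly your phase-representation bound $|\pe(x,y)|\le \frac{1}{\pi |x-y|}$ in disguise. The only difference is presentational: you unpack the Ghosh--Peres variance criterion, the projection-kernel variance identity, and the logarithmic-cutoff capacity estimate that Corollary 2.2 packages, whereas the paper leaves all of that inside the citation.
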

\begin{proof}
By Corollary 2.2 in \cite{buf-rig}, we need to establish the
existence of $R>0$, $C>0$  and $\varepsilon>0$ such that
 for all $|x|<R$ we have $|A(x)|\leq
      C|x|^{-1/2+\varepsilon}|E(x)|$;  $|B(x)|\leq
      C|x|^{-1/2+\varepsilon}|E(x)|$ and for all $|x|>R$ we have $|A(x)|\leq C|x|^{1/2-\varepsilon}|E(x)|$;
$|B(x)|\leq C|x|^{1/2-\varepsilon}|E(x)|$;
and these conditions  hold since  $|A(x)|, |B(x)|\leq |E(x)|$.
\end{proof}

Proposition 8.1 in \cite{BQ} now implies the following
\begin{corollary}\label{rigid-disjoint}
For any $k,l\in {\mathbb N}$, $k\neq l$, for almost any $k$-tuple $(p_1,
 \dots, p_k)$ and almost any $l$-tuple $(q_1, \dots, q_l)$ of distinct
 points in $\mathbb R$, the reduced Palm measures $\Prob_{\pe}^{p_1,
 \dots, p_k}$ and $\Prob_{\pe}^{q_1, \dots, q_l}$ are mutually
 singular.
\end{corollary}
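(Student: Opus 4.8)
The plan is to turn the Ghosh--Peres rigidity of $\Prob_{\pe}$, established in the Theorem, into a measurable functional that is almost surely constant under each reduced Palm measure, with the constant recording the number of conditioning points. First I would fix the tuples $\bar p=(p_1,\dots,p_k)$ and $\bar q=(q_1,\dots,q_l)$ of distinct points, and choose $R>0$ so large that all of them lie in the open interval $I=(-R,R)$. Writing $N_I=\#_I$, rigidity provides an $\mathcal F_{\R\setminus I}^{\Prob_{\pe}}$-measurable function $\Phi_I$ with $N_I=\Phi_I$ holding $\Prob_{\pe}$-almost surely; I then set $\xi_I=N_I-\Phi_I$, a Borel functional on $\Conf(\R)$ satisfying $\xi_I=0$ $\Prob_{\pe}$-a.s. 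The point is that $\xi_I$ measures, via the exterior configuration, the ``number defect'' of a configuration, and this defect should detect how many particles have been inserted inside $I$ by the Palm conditioning.

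The key step is to transfer the almost sure identity $\xi_I=0$ from $\Prob_{\pe}$ to the reduced Palm measures. For this I would apply the reduced Campbell (Mecke) formula to the nonnegative functional $(\bar x,Y)\mapsto \mathbf{1}_{I^k}(\bar x)\,\mathbf{1}_{\{\xi_I\neq 0\}}(Y\cup\bar x)$, using that the $k$-th correlation function $\rho_k(\bar x)=\det\bigl[\pe(x_i,x_j)\bigr]_{i,j=1}^k$ is strictly positive for almost every distinct tuple. Since each ordered distinct $k$-tuple $\bar x\subset X\cap I$ reconstructs $X$ as $(X\setminus\bar x)\cup\bar x$, the formula reads
\[
\int_{I^k}\Prob_{\pe}^{\bar x}\bigl\{Y:\xi_I(Y\cup \bar x)\neq 0\bigr\}\,\rho_k(\bar x)\,d\bar x
= \mathbb{E}_{\Prob_{\pe}}\bigl[\mathbf{1}_{\{\xi_I\neq 0\}}\,\nu_k(I)\bigr],
\]
where $\nu_k(I)=N_I(N_I-1)\cdots(N_I-k+1)$ counts ordered distinct $k$-tuples of particles in $I$. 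The right-hand side vanishes because $\xi_I=0$ $\Prob_{\pe}$-a.s., and positivity of $\rho_k$ forces $\xi_I(Y\cup\bar x)=0$ $\Prob_{\pe}^{\bar x}$-a.s. for Lebesgue-a.e. $\bar x\in I^k$. By definition of the reduced Palm measure the configuration $Y$ contains none of the points $\bar x$, and $\Prob_{\pe}$ is simple, so $N_I(Y\cup\bar x)=N_I(Y)+k$ while $\Phi_I(Y\cup\bar x)=\Phi_I(Y)$ since $\bar x\subset I$ leaves the exterior unchanged. Hence $\xi_I(Y)=-k$ holds $\Prob_{\pe}^{\bar p}$-a.s. for a.e. $\bar p$, and identically $\xi_I=-l$ holds $\Prob_{\pe}^{\bar q}$-a.s. for a.e. $\bar q$.

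Finally, since $k\neq l$, the Borel set $\{\xi_I=-k\}$ carries full mass under $\Prob_{\pe}^{\bar p}$ and zero mass under $\Prob_{\pe}^{\bar q}$, which is exactly the asserted mutual singularity; this is precisely the mechanism encoded in Proposition~8.1 of~\cite{BQ}. The step I expect to be the main obstacle is the transfer in the second paragraph: one must verify that the purely $\Prob_{\pe}$-almost sure rigidity identity descends to the Palm measures for almost every base tuple, and that the rigidity count is additive, increasing by exactly one per inserted interior particle. The Campbell disintegration together with the absence of particles of $Y$ at the Palm points is what makes this rigorous, and the positivity of $\rho_k$ away from the diagonals is the technical input that lets me pass from an integrated vanishing statement to an almost sure one for a.e.\ choice of $\bar p$ and $\bar q$.
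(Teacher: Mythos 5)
Your proposal is correct in substance, but it takes a different route from the paper only in the sense that the paper gives no argument at all: the corollary is obtained there by combining the rigidity Theorem with a direct citation of Proposition~8.1 of \cite{BQ}, which states precisely that a rigid point process has mutually singular reduced Palm measures of different orders. What you have done is open up that black box and reprove the cited proposition in this setting: rigidity supplies an exterior-measurable version $\Phi_I$ of $\#_I$, the reduced Campbell--Mecke disintegration transfers the $\Prob_{\pe}$-a.s.\ identity $\#_I=\Phi_I$ to the Palm measures, and inserting $k$ (resp.\ $l$) points inside $I$ shifts the defect $\xi_I=\#_I-\Phi_I$ to $-k$ (resp.\ $-l$), which separates the two measures when $k\neq l$. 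This is exactly the mechanism behind the result in \cite{BQ}, so your argument buys self-containedness at the cost of a few technical points you should still tidy up: (i) the rigidity definition only gives $\Phi_I$ measurable with respect to the $\Prob_{\pe}$-completion of $\mathcal{F}_{\R\setminus I}$, so you must first replace it by a genuinely $\mathcal{F}_{\R\setminus I}$-measurable (Borel) version, modified on a null set, before you may conclude $\Phi_I(Y\cup\bar x)=\Phi_I(Y)$ pointwise; (ii) strict positivity of $\rho_k$ Lebesgue-a.e.\ is not needed and can in fact fail (for $E(z)=\prod_{i=1}^n(z+a_i)$ one has $\rho_k\equiv 0$ for $k>n$), so the clean formulation is that your conclusion holds for $\rho_k(\bar x)\,d\bar x$-almost every tuple, which is also the only regime in which the reduced Palm measures are defined; (iii) since the interval $I=(-R,R)$ and hence the exceptional null sets depend on $R$, you should run the argument for all integer $R$ and take a countable union of null sets to get a single full-measure set of tuples on which the singularity holds for every admissible pair. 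None of these affects the validity of the approach.
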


\section{Quasi-Symmetries}

We next give sufficient conditions for the equivalence of Palm measures
of the same order.

Let $p_1, \ldots, p_l, q_1, \ldots, q_l \in \mathbb{R}$ be distinct.
For $R>0$, $\varepsilon>0$ and a configuration $X$ on $\mathbb{R}$  write
$${{\overline \Psi}_{R, \varepsilon}(p_1, \ldots, p_l; q_1,
 \ldots, q_l; X)}=  C(R, \varepsilon)\times\prod_{x \in X, |x| \leq R, \min |x-q_i|\geq
 \varepsilon} \prod_{i=1}^l \left( \displaystyle \frac{x-p_i}{x-q_i}
 \right)^2,
$$
where the constant $C(R, \varepsilon)$ is chosen in such a way that
\begin{equation}
\displaystyle \int\limits_{\Conf({\mathbb R})}
{\overline \Psi}_{R, \varepsilon}(p_1, \ldots, p_l; q_1, \ldots, q_l; X)
d\Prob_{\pe}^{q_1, \dots, q_l}=1.
\end{equation}

We will often need the following assumption on our de Branges function
$E$:
\begin{equation} \label{main-assum}
\int\limits_{\mathbb R} \displaystyle\frac{\frac{\partial}{\partial y}
 |E(x+iy)|^2|_{y=0}}{(1+x^2)|E(x)|^{2}} dx <+\infty.
\end{equation}

  Given our de Branges function $E$, there exists a nondecreasing
  continuous function $\phi$ on $\R$ such that
       $E(x) \exp(i \phi(x))$ is real for all $x \in \R$.
The function $\phi(x)$ is called a \textit{phase function} associated with
       $E(z)$.
       We note that
       \begin{equation}
	\phi'(x) = \pi \pe(x,x) > 0 \quad (\forall x \in \R).
	 \label{phiprime}
       \end{equation}
       (See de Branges \cite{deBr} Problem 48.)
From \eqref{eq:pitilde} and \eqref{phiprime},
the assumption (\ref{main-assum})  can equivalently be reformulated as follows
\begin{equation}
\int_{\R} \frac{\phi'(x)}{1+x^2} dx
= \int_{\R} \frac{d \phi(x)}{1+x^2} < \infty.
\label{eq:phix}
\end{equation}
It is known that there exists a $p>0$ such that
\begin{equation}\label{eq:logder}
{\displaystyle \frac{\partial}{\partial y} \log |E(x+iy)|}  
= p y + \displaystyle\frac{1}{\pi} \displaystyle\int\limits_{-\infty}^{\infty} \displaystyle \frac{y}{(t-x)^2 + y^2}
 d\phi(t) \nonumber
\end{equation}
if $E$ has no real zeros
and $|E(x+iy)|$ is a nondecreasing function of $y>0$ for each $x \in
\R$.
(See de Branges \cite{deBr} Problem 63.)

 {\bf Remark.}
  If $E$ is of exponential type and has no real zeros, then
  the condition \eqref{eq:phix} holds.
  Indeed, if $E$ is of exponential type, then
$|E(x+iy)|$ is nondecreasing in $y>0$ (See Dym \cite{Dym} Lemma 4.1).
  Setting $x=0$ and $y=1$ in \eqref{eq:logder} yields \eqref{eq:phix}.
  In particular, if $E$ is \textit{short} in the sense that $B(E)$ is closed
  under the map $f(z) \mapsto \frac{f(z) - f(i)}{z-i}$
  (see \cite{DymMcKean} Proposition 6.2.2),
  then \eqref{eq:phix} holds.

\begin{proposition}\label{regmult-cont}
Let $E$ be a de Branges function satisfying \eqref{eq:phix}.
 Then the limit
 $$
  {{\overline \Psi}(p_1, \ldots, p_l; q_1, \ldots, q_l; X)}
  = \lim\limits_{R
 \to \infty, \varepsilon\to 0} \Psi_{R, \varepsilon} (p_1, \ldots, p_l;
 q_1, \ldots, q_l; X)
$$
exists  in $L_1(\Conf({\mathbb R}), \Prob_{\pe}^{q_1, \dots, q_l})$ as well as
almost surely along a subsequence,  and satisfies
\begin{equation}
\displaystyle \int\limits_{\Conf({\mathbb R})}
{\overline \Psi}(p_1, \ldots, p_l; q_1, \ldots, q_l; X)
d\Prob_{\pe}^{q_1, \dots, q_l}=1.
\end{equation}
\end{proposition}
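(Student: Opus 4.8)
The plan is to reduce the statement to an $L_2$-convergence for a \emph{centered additive} functional, whose variance is controlled precisely by \eqref{eq:phix}. Writing
\[
v(x)=2\sum_{i=1}^l\log\Bigl|\frac{x-p_i}{x-q_i}\Bigr| ,
\]
so that $\prod_{i=1}^l\bigl((x-p_i)/(x-q_i)\bigr)^2=e^{v(x)}$, set $v_{R,\varepsilon}=v\cdot\mathbf{1}_{\{|x|\le R,\ \min_i|x-q_i|\ge\varepsilon\}}$ and
\[
S_{R,\varepsilon}(X)=\sum_{x\in X}v_{R,\varepsilon}(x),\qquad
\widetilde S_{R,\varepsilon}=S_{R,\varepsilon}-\mathbb E_{\Prob_{\pe}^{q_1,\dots,q_l}}\!\bigl[S_{R,\varepsilon}\bigr].
\]
Since the configurations are sampled from $\Prob_{\pe}^{q_1,\dots,q_l}$ and the constant $C(R,\varepsilon)$ is exactly $1/\mathbb E[e^{S_{R,\varepsilon}}]$, the deterministic shift $\exp(\mathbb E[S_{R,\varepsilon}])$ cancels, giving
\[
\overline\Psi_{R,\varepsilon}
=\frac{e^{S_{R,\varepsilon}}}{\mathbb E\bigl[e^{S_{R,\varepsilon}}\bigr]}
=\frac{e^{\widetilde S_{R,\varepsilon}}}{\mathbb E\bigl[e^{\widetilde S_{R,\varepsilon}}\bigr]} .
\]
The centering is essential: the mean $\mathbb E[S_{R,\varepsilon}]$, whose leading part is a multiple of $\int_{|x|>1}\pe(x,x)\,x^{-1}\,dx$, need not converge, whereas $\widetilde S_{R,\varepsilon}$ will.

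First I would show that $\widetilde S_{R,\varepsilon}$ is Cauchy, hence convergent, in $L_2(\Prob_{\pe}^{q_1,\dots,q_l})$. The reduced Palm measure of the projection $\pe$ is again determinantal, with kernel $K$ an orthogonal projection obtained from $\pe$ by a rank-$l$ modification, so the variance of a linear statistic is
\[
\mathrm{Var}\!\Bigl(\sum_{x\in X}u(x)\Bigr)
=\tfrac12\iint_{\R^2}\bigl(u(x)-u(y)\bigr)^2|K(x,y)|^2\,dx\,dy
\le 2\int_{\R}u(x)^2K(x,x)\,dx .
\]
Applying this to $u=v_{R,\varepsilon}-v_{R',\varepsilon'}$, the Cauchy property follows once $\int_{\R}v(x)^2K(x,x)\,dx<\infty$, which I would check in three regions: for large $|x|$ one has $v(x)=2\sum_i(q_i-p_i)/x+O(x^{-2})=O(|x|^{-1})$ while $K(x,x)$ agrees with $\pe(x,x)$ up to a rank-$l$ term, so by \eqref{phiprime} the tail is dominated by $\int \pe(x,x)(1+x^2)^{-1}\,dx$, finite by \eqref{eq:phix}; near each $p_i$ the singularity $v^2\sim(\log|x-p_i|)^2$ is locally integrable against a bounded density; and near each $q_i$ the singularity $(\log|x-q_i|)^2$ is tamed by the quadratic vanishing $K(x,x)=O((x-q_i)^2)$ of the Palm kernel, which also makes the passage $\varepsilon\to0$ harmless. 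Passing to a subsequence then gives $\widetilde S_{R,\varepsilon}\to\widetilde S$, and hence $e^{\widetilde S_{R,\varepsilon}}\to e^{\widetilde S}$, almost surely.

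Next I would upgrade this to $L_1$-convergence of $\overline\Psi_{R,\varepsilon}$ and identify the normalization. One cannot use the ordinary Fredholm determinant here, because $(g_{R,\varepsilon}-1)K$ with $g_{R,\varepsilon}=e^{v_{R,\varepsilon}}$ need not be trace class in the limit: its trace involves $\int\pe(x,x)\,x^{-1}\,dx$, only conditionally convergent. Since however $g_{R,\varepsilon}-1=O(|x|^{-1})$, the operator is Hilbert--Schmidt,
\[
\|(g_{R,\varepsilon}-1)K\|_{{\mathscr I}_2}^2=\int_{\R}\bigl(g_{R,\varepsilon}(x)-1\bigr)^2K(x,x)\,dx
\]
being finite and convergent by the same three-region analysis. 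I would therefore use the $2$-regularized determinant,
\[
\mathbb E\bigl[e^{\widetilde S_{R,\varepsilon}}\bigr]
=\det{}_{2}\bigl(1+(g_{R,\varepsilon}-1)K\bigr)\,
\exp\Bigl(\int_{\R}\bigl(g_{R,\varepsilon}-1-v_{R,\varepsilon}\bigr)K(x,x)\,dx\Bigr),
\]
in which $g-1-v=e^{v}-1-v=O(v^2)=O(x^{-2})$, so the correction integral also converges by \eqref{eq:phix} and both factors tend to finite positive limits. The identical computation for $g_{R,\varepsilon}^2$ gives convergence of $\mathbb E[e^{2\widetilde S_{R,\varepsilon}}]$, whence $\mathbb E[\overline\Psi_{R,\varepsilon}^2]=\mathbb E[e^{2\widetilde S_{R,\varepsilon}}]/\bigl(\mathbb E[e^{\widetilde S_{R,\varepsilon}}]\bigr)^2$ is bounded. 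Thus $\{\overline\Psi_{R,\varepsilon}\}$ is $L_2$-bounded, hence uniformly integrable, and together with the almost-sure subsequential limit this yields convergence in $L_1(\Conf(\R),\Prob_{\pe}^{q_1,\dots,q_l})$ to $e^{\widetilde S}/\mathbb E[e^{\widetilde S}]$, whose integral against $\Prob_{\pe}^{q_1,\dots,q_l}$ equals $1$.

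The hard part will be the behaviour near the fixed points $q_i$ as $\varepsilon\to0$. There $v$ and $g_{R,\varepsilon}-1$ blow up logarithmically, and the whole scheme rests on the reduced Palm kernel of the projection $\pe$ vanishing quadratically, $K(x,x)=O((x-q_i)^2)$, so that $(\log|x-q_i|)^2K(x,x)$ is integrable and the excised annuli are negligible in Hilbert--Schmidt norm. Establishing this quadratic vanishing together with uniform-in-$(R,\varepsilon)$ Hilbert--Schmidt bounds --- as opposed to the trace-class bounds that genuinely fail --- is the delicate point; the decay at infinity, by contrast, is exactly the content of \eqref{eq:phix}.
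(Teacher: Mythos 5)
Your first half is sound, and it is in fact the same mechanism that underlies the paper's treatment: the paper gives no self-contained proof of this proposition, but imports it from the regularized multiplicative functionals of \cite{buf-gibbs}, for which the condition \eqref{eq:phix} is exactly the hypothesis to be verified. Your centered functional $\widetilde S_{R,\varepsilon}$, the fact that the reduced Palm kernel $K$ is again a projection with $K(x,x)\le\pe(x,x)$, its quadratic vanishing $K(x,x)=O((x-q_i)^2)$, and the variance bound giving the $L_2$-Cauchy property all match that scheme; your formula $\mathbb E[e^{\widetilde S_{R,\varepsilon}}]=\det_2(1+(g_{R,\varepsilon}-1)K)\exp\big(\int(g_{R,\varepsilon}-1-v_{R,\varepsilon})K(x,x)dx\big)$ is also correct for fixed $(R,\varepsilon)$, and the correction integral does converge, since $(e^{v}-1-v)K(x,x)=O(1)$ near $q_i$. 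The two gaps are both in your second half, and both come from overestimating what the quadratic vanishing of $K$ buys.

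First, $(g_{R,\varepsilon}-1)K$ is \emph{not} uniformly Hilbert--Schmidt: near $q_i$ one has $g-1\sim C|x-q_i|^{-2}$, hence $(g-1)^2K(x,x)\sim C'|x-q_i|^{-2}$, which is not integrable, so $\|(g_{R,\varepsilon}-1)K\|_{{\mathscr I}_2}\to\infty$ as $\varepsilon\to 0$ and your $\det_2$ factor has no limit. The quadratic vanishing of $K(x,x)$ compensates precisely the singularity of $\sqrt{g}-1\sim C|x-q_i|^{-1}$, not that of $g-1$. This is why \cite{buf-gibbs} symmetrizes before taking determinants: $(\sqrt{g}-1)K$ is Hilbert--Schmidt and $(\sqrt{g}-1)K(\sqrt{g}-1)$ is trace class, uniformly in $(R,\varepsilon)$, both norms being controlled by $\int(\sqrt{g}-1)^2K(x,x)\,dx<\infty$ (finite at infinity by \eqref{eq:phix}, finite at $q_i$ by the quadratic vanishing); in other words one must work with $\sqrt{g}\,K\sqrt{g}$ rather than with $(g-1)K$.

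Second, and fatally, the $L_2$-boundedness you rely on is false, not merely unproven. For $\mathbb E[e^{2S_{R,\varepsilon}}]$ the relevant function is $g^2-1\sim C|x-q_i|^{-4}$, and even the doubly sandwiched operator $K(g^2-1)K$ blows up: its trace equals $\int(g^2(z)-1)K(z,z)\,dz\sim\int|z-q_i|^{-2}dz$, divergent as $\varepsilon\to0$. Consistently, the limit functional is the Radon--Nikodym derivative of Proposition \ref{palm-meas-cont}, so $\mathbb E_{\Prob_{\pe}^{q_1,\dots,q_l}}[\overline\Psi^2]=\mathbb E_{\Prob_{\pe}^{p_1,\dots,p_l}}[\overline\Psi]=\infty$, because the intensity of $\Prob_{\pe}^{p_1,\dots,p_l}$ does not vanish at the $q_i$, where $g$ has a non-integrable singularity. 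Hence uniform integrability can never be extracted from second moments here. The route that works, and the one behind the paper's citation, is to prove convergence of the expectations $\mathbb E[\Psi_{g_{R,\varepsilon}}]$ themselves via the symmetrized determinants above, splitting $g$ into its bounded part $g\wedge 1$ (whose multiplicative functionals are $\le 1$, so dominated convergence applies) and the part that is $\ge 1$, and then to combine almost-sure convergence with convergence of expectations via Scheff\'e's lemma to obtain $L_1$-convergence. If you replace your $\det_2$/second-moment step by that argument, the rest of your outline goes through.
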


Corollary 4.12 in \cite{buf-gibbs} now directly implies

\begin{proposition} \label{palm-meas-cont} Let $E$ be a de Branges
 function satisfying \eqref{eq:phix}.
Then for any distinct points
 $p_1, \dots, p_l$, ${q}_1,\dots, {q}_l\in {\mathbb R}$,  the
 corresponding reduced Palm measures are equivalent, and we have
$$
\displaystyle \displaystyle \frac{{d{\mathbb P}_{\Pi^{{p}_1,\dots, {
 p}_l}}}}{{d{{\mathbb P}_{\Pi^{{q}_1,\dots, {q}_l}}}}}(X)={\overline
 \Psi}(p_1, \ldots, p_l; q_1, \ldots, q_l; X).
$$
\end{proposition}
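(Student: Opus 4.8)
The plan is to deduce Proposition~\ref{palm-meas-cont} from the abstract quasi-symmetry theory of \cite{buf-gibbs}, feeding it the analytic input already secured in Proposition~\ref{regmult-cont}. In outline, I would (i) realize each reduced Palm measure as a determinantal process attached to an explicit projection kernel, (ii) identify the regularized multiplicative functional $\overline\Psi$ as the weight intertwining the two such processes, and (iii) invoke Corollary~4.12 of \cite{buf-gibbs} to convert the $L_1$-convergence and unit normalization of $\overline\Psi$ into mutual absolute continuity together with the stated Radon--Nikodym formula.

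First I would recall the description of reduced Palm measures for projection determinantal processes: conditioning $\Prob_{\pe}$ to contain particles at the distinct points $q_1,\dots,q_l$ again yields a determinantal process, whose kernel $\Pi^{q_1,\dots,q_l}$ is the orthogonal projection onto
$$
\mathcal H_q=\{f\in\be:\ f(q_1)=\dots=f(q_l)=0\},
$$
a subspace of codimension $l$ in $\be$; the analogous statement holds with $q$ replaced by $p$ and $\mathcal H_q$ by $\mathcal H_p$.

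Next I would compare the two subspaces through the rational function $m(x)=\prod_{i=1}^{l}(x-p_i)/(x-q_i)$. Since the $p_i,q_i$ are real and distinct, multiplication by $m$ carries $\mathcal H_q$ onto $\mathcal H_p$: division by $x-q_i$ is legitimate on functions vanishing at $q_i$, while the factors $x-p_i$ install the required zeros, and $1/m$ inverts the map. On $\R$ one has $|m(x)|^2=\prod_i (x-p_i)^2/(x-q_i)^2$, which is precisely the weight appearing in the truncations $\overline\Psi_{R,\varepsilon}$; thus, in the language of \cite{buf-gibbs}, $\Prob_{\Pi^{p_1,\dots,p_l}}$ and $\Prob_{\Pi^{q_1,\dots,q_l}}$ are the two projection processes related by the multiplicative functional with weight $|m|^2$. (The sign ambiguity between $|m|\,\mathcal H_q$ and $m\,\mathcal H_q$ is harmless: multiplication by $\mathrm{sgn}(m)$ is a gauge transformation $K(x,y)\mapsto \mathrm{sgn}(m(x))\,\mathrm{sgn}(m(y))\,K(x,y)$, which leaves all correlation functions, and hence the determinantal measure, unchanged.)

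It then remains to apply Corollary~4.12 of \cite{buf-gibbs}, which states that once the regularized multiplicative functional relating two such Palm subspaces converges in $L_1(\Prob_{\pe}^{q_1,\dots,q_l})$ to a limit of integral one, the Palm measure $\Prob_{\Pi^{p_1,\dots,p_l}}$ is absolutely continuous with respect to $\Prob_{\Pi^{q_1,\dots,q_l}}$ with density equal to that limit. Proposition~\ref{regmult-cont} supplies exactly these hypotheses and identifies the limit as $\overline\Psi(p_1,\dots,p_l;q_1,\dots,q_l;X)$; interchanging the roles of $p$ and $q$ (which replaces $m$ by $1/m$) gives the reverse absolute continuity, whence the two Palm measures are equivalent. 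The main obstacle is entirely contained in Proposition~\ref{regmult-cont}, namely the convergence of the regularized product as $R\to\infty$ and $\varepsilon\to0$: it is the summability encoded in \eqref{eq:phix}, equivalently the integrability of $\phi'(x)/(1+x^2)$, that controls the variance of the additive statistic $\sum_{x}\log|m(x)|^2$ and thereby forces the regularized limit to exist in $L_1$ with unit mean. Granting that proposition, the only verification genuinely internal to the present argument is the bijectivity of multiplication by $m$ from $\mathcal H_q$ onto $\mathcal H_p$, which follows from the division and multiplication properties of the de Branges space $B(E)$.
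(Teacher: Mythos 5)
Your proposal follows exactly the paper's route: the paper's entire proof is that Corollary 4.12 of \cite{buf-gibbs} applies directly once Proposition~\ref{regmult-cont} supplies the $L_1$-convergence and unit normalization of the regularized multiplicative functional, which is precisely your step (iii). The additional material you include --- the identification of the reduced Palm kernels as projections onto the subspaces of functions vanishing at the conditioning points, and the intertwining by the rational factor $\prod_i (x-p_i)/(x-q_i)$ --- is simply an unwinding of the mechanism inside the cited corollary, not a different argument.
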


Theorem 1.5 in \cite{buf-gibbs}  directly implies the following
\begin{proposition}\label{QS}
Let $E$ be a de Branges function satisfying \eqref{eq:phix}.
Let $F: {\mathbb R}\to {\mathbb R}$ be a  diffeomorphism acting as the
 identity beyond a bounded open set $V\subset {\mathbb R}$. For
 $\mathbb{P}_{\pe}$-almost every configuration $X\in \Conf({\mathbb R})$
 the following holds.
If $X\bigcap V=\{q_1, \dots, q_l\}$, then
\begin{multline}\label{RN}
\displaystyle \frac{d\mathbb{P}_{\pe}\circ
 F}{d\mathbb{P}_{\pe}}(X)={\overline \Psi}(F(q_1), \dots, F(q_l); q_1,
 \dots, q_l; X)\times
 \\ \times \displaystyle \frac{\det({ \pe}(F(q_i),
 F(q_j))_{i,j=1,\dots,l}}{\det({\pe}(q_i,q_j))_{i,j=1,\dots,l}}
 \times \displaystyle F^{\prime} (q_1)\ldots \displaystyle
 F^{\prime}(q_l).
 \end{multline}
\end{proposition}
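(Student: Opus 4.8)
The plan is to derive the Radon-Nikodym derivative \eqref{RN} from the general quasi-symmetry theorem, Theorem~1.5 in \cite{buf-gibbs}, by verifying that $\Prob_{\pe}$ fits its framework; the substantive analytic input has already been prepared in the preceding propositions, so the remaining work is to identify the three factors correctly. First I would record the structural facts that make the abstract theorem applicable: $\pe$ is the kernel of the orthogonal projection onto the reproducing kernel Hilbert space $\be\subset L_2(\R)$, whose elements are restrictions to $\R$ of entire functions and in particular are continuous, so that the correlation functions are continuous and the $l$-point function equals $\det(\pe(q_i,q_j))_{i,j=1}^l$. This is precisely the setting in which Theorem~1.5 of \cite{buf-gibbs} operates.

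The mechanism producing the three factors is a disintegration of $\Prob_{\pe}$ over the particles lying in the bounded set $V$. Since $F$ is the identity off $V$, the map $X\mapsto F(X)$ fixes the outer configuration $X\setminus V$ and transports only the finite inner set $X\cap V=\{q_1,\dots,q_l\}$ onto $\{F(q_1),\dots,F(q_l)\}$. Writing $\Prob_{\pe}$ as an integral over the positions of the inner points, the joint intensity of finding particles at $q_1,\dots,q_l$ is the correlation function $\det(\pe(q_i,q_j))_{i,j=1}^l$, while conditionally on $X\cap V$ the remaining configuration is governed by the reduced Palm measure $\Prob_{\pe}^{q_1,\dots,q_l}$. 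Pushing forward by $F$ yields exactly three contributions: the Jacobian $F'(q_1)\cdots F'(q_l)$ from the change of variables $q_i\mapsto F(q_i)$ in the inner coordinates; the ratio $\det(\pe(F(q_i),F(q_j)))/\det(\pe(q_i,q_j))$ from the transformation of the correlation-function density; and the change of the outer law, recorded by the Radon-Nikodym derivative between the reduced Palm measures with conditioning points $F(q_i)$ and $q_i$.

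The outer factor is where assumption \eqref{eq:phix} enters and is the heart of the matter. By Proposition~\ref{palm-meas-cont}, under \eqref{eq:phix} the reduced Palm measures $\Prob_{\pe}^{F(q_1),\dots,F(q_l)}$ and $\Prob_{\pe}^{q_1,\dots,q_l}$ are equivalent with density $\overline\Psi(F(q_1),\dots,F(q_l);q_1,\dots,q_l;X)$, which is the first factor of \eqref{RN}; that this density exists as a genuine $L_1$ and almost-sure limit of the regularized products $\overline\Psi_{R,\varepsilon}$ is guaranteed by Proposition~\ref{regmult-cont}. Hence all three factors are finite and well defined $\Prob_{\pe}$-almost surely, and Theorem~1.5 of \cite{buf-gibbs} assembles them into the derivative \eqref{RN}.

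The main obstacle I anticipate is not the algebra of the three factors but the justification that the disintegration over $V$ and the passage from the regularized functionals $\overline\Psi_{R,\varepsilon}$ to the limit $\overline\Psi$ commute with the push-forward, so that the formally separate inner and outer contributions combine into a single globally well-defined density on $\Conf(\R)$. This is exactly the content encapsulated in Theorem~1.5 of \cite{buf-gibbs}, whose proof rests on the equivalence of Palm measures of equal order together with the integrability furnished by \eqref{eq:phix}; once those inputs are secured, as they are here, the identity \eqref{RN} follows.
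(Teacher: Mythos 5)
Your proposal is correct and follows essentially the same route as the paper: the paper gives no separate argument for Proposition~\ref{QS}, deriving it directly from Theorem~1.5 of \cite{buf-gibbs}, with Propositions~\ref{regmult-cont} and~\ref{palm-meas-cont} (valid under assumption \eqref{eq:phix}) supplying the existence and the Palm-equivalence role of the density ${\overline \Psi}$. Your additional discussion of the disintegration mechanism and the origin of the three factors is a faithful sketch of what Theorem~1.5 of \cite{buf-gibbs} encapsulates, not a deviation from the paper's approach.
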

{\bf Remark.} The open set $V$ can be chosen in many ways; the resulting
value of the Radon-Nikodym derivative is of course the same.

{\bf Remark.} As in \cite{buf-gibbs}, $F$ can, more generally, be a
compactly supported Borel automorphism preserving the Lebesgue measure
class. In this case, the derivative $F^{\prime}$ in (\ref{RN}) should be
replaced by the Radon-Nikodym derivative of the Lebesgue measure under
$F$.

{\bf Remark.} Conditional measures of our DPPs can now also be found using
the results of \cite{buf-cond}.

{\bf Acknowledgements.}
We are grateful to Dmitri Chelkak for useful discussions. 
The research of A. Bufetov on this project has received funding from the European Research Council (ERC) under the European Union's Horizon 2020 research and innovation programme under grant agreement No 647133 (ICHAOS). 
It was also supported  by a subsidy granted to the HSE by the Government
of the Russian Federation for the implementation of the Global
Competitiveness Programme. The research of T.~Shirai was supported in
part by the Japan Society for the Promotion of Science (JSPS) Grant-in-Aid for Scientific Research (B) 26287019.

\end{document}